	\numberwithin{equation}{section}
	\theoremstyle{plain}
	\newtheorem{lem}{Lemma}
	\newtheorem{rem}{Remark}
	\newtheorem{prop}{Proposition}
	\newcommand{\f}{\Phi}
	\newcommand{\R}{{\mathbb R}}
	\newcommand{\N}{{\mathbb N}}
	\newcommand{\eg}{{{\ell_1} }}
	\newcommand{\ed}{{{\ell_2} }}
	\newcommand{\thechapterwords}
	{ \ifcase \thechapter\or 1\or 2\or 3\or 4\or 5\or
		6\or 7\or 8\or 9\or 10\or 11\fi}
	\def\thickhrulefill{\leavevmode \leaders \hrule height 2ex \hfill \kern \z@}
	\def\@makechapterhead#1{%
		\vspace*{15\p@}%
		{\parindent \z@ \centering \reset@font
			\thickhrulefill\quad
			\scshape  {\chapnumfont \@chapapp{}}{\chapnumfont \thechapterwords}
			\quad \thickhrulefill
			\par\nobreak
			\vspace*{15\p@}%
			\interlinepenalty\@M
			\hrule
			\vspace*{15\p@}%
			\huge {\bfseries  #1}\par\nobreak
			\par
			\vspace*{15\p@}%
			\hrule
			\vskip 15\p@
	}}
	\def\@makeschapterhead#1{%
		\vspace*{15\p@}%
		{\parindent \z@ \centering \reset@font
			\thickhrulefill
			\par\nobreak
			\vspace*{15\p@}%
			\interlinepenalty\@M
			\hrule
			\vspace*{15\p@}%
			\Huge \bfseries #1\par\nobreak
			\par
			\vspace*{15\p@}%
			\hrule
			\vskip 30\p@
	}}
	\DeclareFixedFont{\chapnumfont}{T1}{phv}{b}{n}{20pt}
	\DeclareFixedFont{\chapchapfont}{T1}{phv}{b}{n}{16pt}
	\DeclareFixedFont{\chaptitfont}{T1}{phv}{b}{n}{24.88pt}
	\def\@makechapterhead#1{%
		\vspace*{15\p@}%
		{\parindent \z@ \centering \reset@font
			\thickhrulefill\quad
			\scshape {\chaptitfont\color[rgb]{0.00,0.50,1.00}\@chapapp{}}
			{\chapnumfont \thechapterwords}
			\quad \thickhrulefill
			\par\nobreak
			\vspace*{15\p@}%
			\interlinepenalty\@M
			\hrule
			\vspace*{15\p@}%
			{\Large\bfseries #1}\par\nobreak
			\par
			\vspace*{15\p@}%
			\hrule
			\vskip 30\p@
	}}%
\begin{document}

 				\selectlanguage{english}
 	\title{$C^1$-Diffeomorphism  Class of some Circle Maps with a Flat Interval}
 		\author{Bertuel TANGUE NDAWA and Carlos OGOUYANDJOU 
	\vspace{0.5cm}
 		\\\today}
				\date{ }
 				\maketitle
		\selectlanguage{english}
 			
 \paragraph{Abstract}
 We study a certain class  circle maps which are constant on one interval (called flat piece), and such that the degrees of the singularities at the boundary of the flat piece are different.
 In this paper, we show that if the topological conjugacy between two maps of my class is a bi-Lipschitz homeomorphism, then it is a $C^1$ diffeomorphism; that is, the  bi-Lipschitz homeomorphism class and $C^1$ diffeomorphism class of a map in our class are equivalent.

 

\textbf{Key words}: Circle map, Flat interval (piece), Critical exponent, Geometry, Renormalization, Bi-Lipschitz,  $C^1$-Diffeomorphism.\vspace{0.5cm}

	\section{Introduction}
A dynamical system (or a dynamic) is a triplet $(E,S_G,A)$ where $S_G$ is a semi-group, and $A: S_G\times E\longrightarrow E$ is an action of the semi-group $S_G$ on the set $E$. As an example, if $f:E\longrightarrow E$ is a map, the map $f: \N\times E\longleftarrow E$, $(n,x)\mapsto f^n(x)$ defines an action of the set $\N$ of non negative integer number on $E$. It is known that, a approximate solution  of the stability of the solar system can be given by a circle map (dynamic) $f: S^1\longleftarrow S^1$ with a flat piece. Also, some of these  maps appear naturally in the study of Cherry flows on the two dimensions torus (see \cite{MSM},\cite{Livi2013}), non-invertible (non-injective) circle continuous  maps (see \cite{Mi}), and of the dependence of the rotation interval on the parameter value for one-parameter families of circle continuous   maps (see \cite{2}). In this work, we are interested in a certain class of  weakly order preserving $C^3$ circle map with a flat piece.
 We write $S^1=\mathbb{R}/\mathbb{Z}$ for the circle, and  $\pi: \mathbb{R}\longrightarrow S^1$ is the natural projection. As a consequence, each circle map  $f$ in our class lifts as  a  unique (up to integer translation)   real continuous map $F$. The rotation number $\rho (f)$ of $f$ is 
\begin{equation*}
\rho (f):=\lim_{n\rightarrow\infty }\dfrac{F^n(x)-x}{n}(mod\:1).
\end{equation*}
This concept  was first introduced by   J. H. Poincaré, see \cite{JHP}.
The dynamic defined by a such map $f$  is more interesting when  $\rho (f)$ is irrational, see \cite{de Melo and van Strien}, pp.19-36, or Remark 4 in \cite{NTBr}. When $\rho (f)$ is irrational,  ones defines its denominators of the nearest rational approximants  as follows: 
 $$q_0=1, \;q_1=a_0, \mbox{ and } q_{n+1}=a_nq_{n}+q_{n-1} \mbox{ when } n\geq 2$$ with 
\begin{equation*}\label{i1}
\rho (f)=[a_0a_1\cdots ]:=\dfrac{1}{a_0+\dfrac{1}{a_1+\dfrac{1}{ \ddots}}}, \;a_i\in \mathbb{N}^*,\; i\in  \mathbb{N}.
\end{equation*}
The rotation number $\rho (f)$ is said to be  Fibonacci or golden mean if
$a_n=1$ for every $n\in\mathbb{N}$. In this work, we are interested in Fibonacci circle maps; which are circle maps with Fibonacci rotation number. In addition to this assumption, we also assume that the critical exponents (the degrees of the singularities at the boundary of the flat interval) are different, and that the schwarzian derivative 
\begin{equation*}
S(f)=	\dfrac{f^{'''}}{f'}-\dfrac{3}{2}\left(\dfrac{f^{''}}{f'}\right)^2
\end{equation*} of a map  $f$ in our class is negative. Note that the proofs of our results do not directly use this assumption on the Schwarzian derivative; while, they use results in \cite{NTBr} that are proven under this assumption. 

Let $f$ and $g$ two $C^2$ circle map with a flat piece, and same rotation. Then, there is a circle homeomorphism $h$ such that $h\circ f= g\circ h$; that means, $f$ and $g$ are topological conjugate via the topological conjugacy $h$, see \cite{MSM}. That is, for a fixed rotation number, there is exactly one topological class.  
The purpose of this presentation is to characterise the $C^1$ diffeomorphism $\{h\circ f\circ h^{-1}, \;h \mbox{ is a } C^1 \mbox{ diffeomorphism }\}$ of $f$  from its topological class. 
Let $U_f$ be the flat piece of $f$. Note that  $h$ still uniquely defined on the attractor  $K_f:=\mathcal{S}^1\setminus \bigcup_{i=0}^{\infty} f^{-i}(U_f)$ of $f$, and  is arbitrarily defined outside; that is why  we are more interested in $h_{K_f}$ the restriction of $h$ on $K_f$ which we still note as $h$.

The study of circle maps with a flat part reveals many surprises. The geometry class is described by $C^{1+\beta}$ diffeomorphism (see \cite{ML,NTBr}), while in others cases as, 
circle diffeomorphisms \cite{Yo} \cite{He}, circle maps with breakpoints \cite{KK}, critical circle homeomorphisms \cite{FM}, \cite{FMP} \cite{KT}, \cite{Ya}, \cite{GMM},  \cite{AV} unimodal maps \cite{Mc}, \cite{MP}, for Kleinian groups \cite{MO},  it turned out that  the conjugations are  smooth. Also, the description of $C^1$ diffeomorphism class in this work,  Propositions 23 and 28 in \cite{NTBr} show that $C^1$ diffeomorphism class, and bi-Lipschitz homeomorphism class are equivalent. That is, if $h$ is a bi-Lipschitz homeomorphism conjugacy between two maps of our class, then $h$ is a $C^1$ diffeomorphism. As a consequence, all conjugacy between two maps of our class with properties between the one of bi-Lipschitz homeomorphism and  $C^1$ diffeomorphism are $C^1$ diffeomorphism.

Before explaining our results in more detail, we first precise the class of function,  adopt  some notations, and present  necessary previous results.  

\paragraph{The class of functions:}
We write $S^1$  as an interval $[a, 1]$ where we identify $a$ with 1.
We denote by $ \widetilde{\mathcal{W}}^X_{[1]} $ the set of Fibonacci $C^3$ circle maps with a flat piece, different critical exponents, and negative Schwartzian derivative.  We consider $ \widetilde{\mathcal{W}}^X_{[1]} $ as a subset of the Cartesian product $\Sigma^{X}\times(Diff^3([0,1]))^3$ where 
\begin{equation*}
	\Sigma^{X}=\{(x_1,x_2,x_3,x_4,s)\in \mathbb{R}^5\,|\;
	x_1<0<x_2,\;x_3<x_4<1,\;0<s<1\}.
\end{equation*}


%
%
A point
\begin{equation*}
	f:=(x_1,x_2,x_3,x_4,s,\varphi,\varphi^l,\varphi^r)
	\in \widetilde{\mathcal{W}}^X_{[1]}
\end{equation*}
is 
$f:[x_1,1]\longrightarrow[x_1,1]$ (by identifying $x_1$ with 1),
\begin{equation*}
x\mapsto \begin{cases}
		\begin{array}{l}f_-(x)= \begin{array}{lcl}f_1(x)=(1-x_2)q_s\circ\varphi\left(\dfrac{x_1-x}{x_1}\right) +x_2 & \mbox{if} & x\in [x_1,0]\end{array}\\
			f_+(x)= \begin{cases} \begin{array}{lcl}f_2(x)=
					x_1\left(\varphi^{l}\left(\dfrac{x_3-x}{x_3}\right)\right)^{\ell_1}  & \mbox{if} & x\in [0,x_3]\\f_3(x)=
					0 & \mbox{if} & x\in [x_3,x_4]\\f_4(x)=
					x_2\left(\varphi^{r}\left(\dfrac{x-x_4}{1-x_4}\right)\right)^{\ell_2 }&  \mbox{if} & x\in [x_4,1]\end{array}
			\end{cases}
		\end{array}
	\end{cases}
\end{equation*}
where  $(\ell_1, \ell_2)\in(1,2)^2$, $\ell_1\neq \ell_2$ is the critical exponents of $f$, and 
$q_s:[0,1]\longrightarrow [0,1] $  is the map
\begin{equation*}
	q_s (x)= \dfrac{[(1-s)x+s]^{\ell_2}-s^{\ell_2} }{1-s^{\ell_2}}.
\end{equation*}

%
%
%

\paragraph{Notations.}
\subparagraph{Common notations.}
Let $f\in \widetilde{\mathcal{W}}^X$ with $U$ as its flat piece.
\begin{enumerate}
	\item Let  $i\in\mathbb{Z}$. We will simply write $\underline{i} $ instead of $f^i(U)$. For example, $\underline{0}=U $. We write the length of the interval $\underline{i} $ as $|\underline{i}|$. So,  $|\underline{i}|  =0$ if $i>0$.  The number $|(\underline{i},\underline{j} )|$ stands for  the distance between the closest endpoints
	of these two intervals, and  $|[\underline{i},\underline{j} )|$ is $|\underline{i}|+|(\underline{i},\underline{j} )|$. 
	\item Let $I$ be an interval. The interval   $ \bar{I} $ is the closure of $I$.
	\item The scaling ratios $\alpha_n, \,n\in\mathbb{N}$ are defined by 
	\begin{equation*}
		\alpha_n:=\dfrac{|(f^{-q_n}(U), U)|}{|(f^{-q_n}(U), U)|+|f^{-q_n}(U)|}=\dfrac{|(\underline{-q_n}, \underline{0})|}{|[\underline{-q_n}
			, \underline{0})|}, \,n\in\mathbb{N}.
	\end{equation*}
	
	\item Let $x_n$ and $y_n$ be two sequences of positive numbers. We say that $x_n$ is of the order of $y_n$ if there exists a uniform positive constant $k$ such that, for $n$ big enough $x_n<ky_n$. We will use the notation
	\begin{equation*}
		x_n= O(y_n).
	\end{equation*}
\end{enumerate}

\subparagraph{Parameters  frequently used.} 
Let $f\in \widetilde{\mathcal{W}}^X_{[1]}$.
\begin{enumerate}
	\item The geometrical (rigidity) characteristics $(c_u(f), c'_u(f) )$, $( c_+(f), c'_+(f))$ and $(c_s(f), c'_s(f) )$ are defined in Proposition~\ref{super formular}. Moreover,
	\begin{enumerate}
		\item[-]  $c^{*}_{\iota}(f)=c^{}_{\iota}(f) \mbox{ or }c'_{\iota}(f),\; \iota=s,u,+ $
		\item[-] $\underline{c}_u(f)=\min\{c_u(f),c'_u(f)\}$ and  $\overline{c}_u(f)=\max\{c_u(f),c'_u(f)\}$.		
	\end{enumerate}
	\item For every $n\in\mathbb{N}$, $S_{i,n}$, $y_{i,n};\,i=1,2,3,4,5$ are defined   in   (\ref{change variable x S}) and  (\ref{change variable S y}) respectively.
	\item The vectors $e_i^{\iota}; \,i=2,3,4,5;\,\iota=s,u,+$ are defined in  Proposition~\ref{super formular}. 
	
\end{enumerate}

Let $(1,2)^2_{\neq}:=\{(\ell_1,\ell_2)\in(1,2)^2, \;\ell_1\neq\ell_2\in(1,2)\}$.
 The main result of this presentation can be formulated as follows. 
\paragraph{Main result}
Let  $(\ell_{1},\ell_{2})\in(1,2)^2_{\neq}$. 	Let   $f,g\in \widetilde{\mathcal{W}}^X_{[1]}$  with  $(\ell_{1},\ell_{2})$ as their  critical exponents. If  $h$ is the topological conjugation between $f$ and $g$,  then 
\begin{align*}
	h \mbox{ is a } C^1  \mbox{ diffeomorphism  }\Longleftrightarrow& c^*_u(f)=c^*_u(g),  c_+(f)+c'_+(f)=c_+(g)+c'_+(g).\\
\end{align*}


\section{Renormalization}

 \begin{rem}~\label{critical exponent and  renormalization}
	 For every  sequence $\eta_{n}$ in the following, we have:   The terms $\eta_{2n}$, $n\in\N$ depend on $(\ell_1, \ell_2)$ by a function 
		$\Psi_{2n}$ if and only if  the terms $\eta_{2n+1}$, $n\in\N$ depend on $(\ell_2, \ell_1)$ by the same function. Therefore, a statement or  proof presented for $n$  odd  deduces by himself the case $n$ even  and vice-versa. 
\end{rem}
Let  $(\ell_{1},\ell_{2})\in(1,2)^2_{\neq}$. Let $f\in  \widetilde{\mathcal{W}}^X_{[1]}$ with $(\ell_{1},\ell_{2})$ as their  critical exponents. 
Let  $x_{1,1}=x_2/x_1$, and let $h: [x_1,x_2)\longrightarrow [x_{1,1},1)$ be the map $x\longmapsto {x}/{x_1} $. The first renormalization  $\mathcal{R} f:  [x_{1,1},1)\longrightarrow [x_{1,1},1)$ is the map
\begin{equation*}\label{renormalization definition explicit}
x\mapsto	\mathcal{R} f (x)=\begin{cases}
		\begin{array}{lcl}
			\mathcal{R} f_-(x)=h\circ f_2\circ h^{-1}(x) &\mbox{if} & x \in [x_{1,1},0)\\
			\mathcal{R} f_+(x)=h\circ f_2\circ f_1\circ h^{-1}(x) &\mbox{if} & x \in (0,1]
		\end{array}
	\end{cases}
\end{equation*}
Observe that, for every $n\geq2$, $	\mathcal{R}^n f:=	\mathcal{R}^{n-1}\circ f	\mathcal{R} f$ is defined, and belongs into $\widetilde{\mathcal{W}}^X_{[1]}$. That is, $f$ is infinitely renormalizable. We write $\mathcal{R}^n f:=(x_{1,n} ,x_{2,n} ,x_{3,n} ,x_{4,n} ,s_n,\varphi_n,\varphi^l_n,\varphi^r_1)$.

The set $\Sigma^{X} $ can be redefined as $\Sigma^{S} $ or $\Sigma^{Y}$  as follows:
\begin{description}
	\item[] $(X)\longrightarrow (S)$. Let
	\begin{equation} \label{change variable S x}\begin{array}{c}
			S_{1,n}=\dfrac{x_{3,n}-x_{2,n}}{x_{3,n}},\;S_{2,n}= \dfrac{1-x_{4,n}
			}{1-x_{2,n}},\\\;S_{3,n}=\dfrac{x_{3,n}}{1-x_{4,n}},\;S_{4,n}=-\dfrac{x_{2,n}}{x_{1,n}},\;S_{5,n}=s_n.
		\end{array}
	\end{equation}
	That is,
	\begin{equation}\label{change variable x S}
		\begin{cases}
			\begin{array}{l}x_{1,n}= \dfrac{S_{3,n}(1-S_{1,n})S_{2,n}}{(1+S_{3,n}(1-S_{1,n})S_{2,n})S_{4,n}} \\
				x_{2,n}= \dfrac{S_{3,n}(1-S_{1,n})S_{2,n}}{1+S_{3,n}(1-S_{1,n})S_{2,n}}\\
				x_{3,n}= \dfrac{S_{3,n}S_{2,n}}{1+S_{3,n}(1-S_{1,n})S_{2,n}}\\
				x_{4,n}=1-\dfrac{S_{2,n}}{1+S_{3,n}(1-S_1)S_{2,n}}
			\end{array}
		\end{cases}
	\end{equation}
	
	\item[]$(S) \longrightarrow (Y)$. Let
	\begin{equation}\label{change variable S y}
		\begin{array}{c}
			y_{1,n}=S_{1,n},\;y_{2,n}=\ln S_{2,n},\;y_{3,n}=\ln S_{3,n},\\y_{4,n}=\ln S_{4,n},\;y_{5,n}=\ln S_{5,n}.
		\end{array}
	\end{equation}
\end{description}
For more detail, see Subsection 4.1 in \cite{NTBr}.




\begin{prop}[Proposition 12, \cite{NTBr}]\label{super formular}
	Let $(\eg,\ed)\in(1,2)^2$. Then, there exists $\lambda_u>1,\;\lambda_s\in(0,1)\;
	E^u,\;E^s$, $E^+,$  $w_{fix}\in\R^4,$ such that the following
	holds. Given $ f\in \widetilde{\mathcal{W}}^X_{[1]} $ with critical exponents  $(\eg,\ed) $, there exists $c_u(f), c'_u(f)<0$, $c_s(f), c'_s(f)$, $c_+(f)$ and $c'_+(f)$ such that, for
	all $n:=2p_n\in\N^*$; $p_n\in\N$,
	\begin{equation*}
		w_{n}(f)=c_u(f) \lambda_u^{p_n}E^u+c_s(f)
		\lambda_s^{p_n}E^s+c_+(f) E^+ + w_{fix}+O(e,c_u(f),\lambda_u,n)
	\end{equation*}
	and 
	\begin{equation*}
		w_{n+1}(f)=c'_u(f) \lambda_u^{p_n}E^u+c'_s(f)
		\lambda_s^{p_n}E^s+c'_+(f) E^+ + w_{fix}+O(e,c'_u(f),\lambda_u,n);
	\end{equation*}
	where, $O(e,c_u(f),\lambda_u,n)$ is the vector whose components are  
	\begin{equation*}
		O\left((e^{c_u(f) \lambda_u^{p_{n-4}}})^{1/\overline{\ell}}\right);
	\end{equation*}
	with,
	\begin{equation*}
		\overline{\ell}:=\max\{\ell_1,\ell_2\}.
	\end{equation*}
	And, we have also
	\begin{equation*}\begin{array}{c}
			dist(\varphi_n)=O\left(e^{\frac{c_u(f) \lambda_u^{p_{n-2}}}{\ell_{1}}}\right),\\
			dist(\varphi_n^l)=O\left(e^{\frac{c_u(f) \lambda_u^{p_{n-1}}}{\ell_{2}}}\right),\\
			dist(\varphi_n^r)=O\left(e^{\frac{c_u(f)
					\lambda_u^{p_{n}}}{\ell_{1}}}\right).
	\end{array}\end{equation*}
\end{prop}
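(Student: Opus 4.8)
The plan is to realise the expansion as the orbit of $f$ under the linearised renormalisation operator near its fixed point, with the nonlinear corrections absorbed into the stated double-exponential remainder. First I would pass to the $Y$-coordinates of \eqref{change variable S y} and write $\mathcal{R}$ in these coordinates, letting $w_n(f)\in\R^4$ denote the resulting renormalisation vector (the chart $X\to S\to Y$ is designed precisely to linearise $\mathcal{R}$). Because a single application of $\mathcal{R}$ interchanges the roles of $\ell_1$ and $\ell_2$ by Remark~\ref{critical exponent and renormalization}, the autonomous object to analyse is $\mathcal{R}^2$; I would exhibit its fixed point $w_{fix}$ and show that, in these coordinates, $\mathcal{R}^2$ is affine to leading order, i.e.
\[
w_{n+2}(f)-w_{fix}=M\,\bigl(w_n(f)-w_{fix}\bigr)+R_n ,\qquad M:=D(\mathcal{R}^2)(w_{fix}),
\]
where $n=2p_n$ advances by $2$ (so $p_n$ advances by $1$) and $R_n$ is a remainder governed by the distortions of the diffeomorphic factors $\varphi_n,\varphi_n^l,\varphi_n^r$. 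Verifying this near-linearity and identifying $M$ from \eqref{change variable x S} is the first block of work.

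Next I would diagonalise $M$. The claim is that its spectrum splits into one expanding eigenvalue $\lambda_u>1$ with eigenvector $E^u$, one contracting eigenvalue $\lambda_s\in(0,1)$ with eigenvector $E^s$, and a neutral eigenvalue $1$ with eigenvector $E^+$; the neutral direction is exactly what produces the $n$-independent term $c_+(f)E^+$. Decomposing $w_0(f)-w_{fix}$ along the basis $(E^u,E^s,E^+)$ defines $c_u(f),c_s(f),c_+(f)$, while the parallel decomposition of $w_1(f)-w_{fix}$ defines the primed coefficients for the odd subsequence; iterating the affine recursion then yields the bare principal term
\[
c_u(f)\lambda_u^{p_n}E^u+c_s(f)\lambda_s^{p_n}E^s+c_+(f)E^++w_{fix}.
\]
The signs $c_u(f),c'_u(f)<0$ I would read off from the orientation of the unstable eigendirection relative to the admissible region $\Sigma^{X}$.

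The core analytic input is the decay of the distortions. Using the standard real-bounds and Koebe argument, which relies on the negative Schwarzian hypothesis, I would prove that $dist(\varphi_n)$, $dist(\varphi_n^l)$ and $dist(\varphi_n^r)$ are bounded by the stated double-exponentials $e^{c_u(f)\lambda_u^{p}/\ell}$: the renormalisation intervals shrink at the rate dictated by the expanding direction $\lambda_u^{p}$, and the nonlinearity of a diffeomorphism restricted to an interval of that length contributes a distortion exponentially small in the length, hence double-exponentially small in $p$. Tracking the two-level lag between the geometry and the chart then gives $\|R_n\|$ of the order of $\bigl(e^{c_u(f)\lambda_u^{p_{n-4}}}\bigr)^{1/\overline{\ell}}$, matching $O(e,c_u(f),\lambda_u,n)$.

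Finally I would close the loop: treating the recursion as a hyperbolic linear map $M$ driven by a super-exponentially small forcing $R_n$, a variation-of-constants (equivalently contraction-mapping) argument on the stable and unstable coordinates shows that the true orbit differs from the bare linear orbit by a term of the same double-exponential size as $R_n$, which folds into the remainder without changing its order. This yields the full expansion for both $w_n(f)$ and $w_{n+1}(f)$. The main obstacle is precisely this coupling: one must show that the super-exponentially small but genuinely nonlinear corrections coming from the infinite-dimensional diffeomorphic part neither destroy the hyperbolic splitting of $M$ nor spoil the convergence of $c_u(f),c_s(f),c_+(f)$. In other words one needs an invariant-manifold statement for $\mathcal{R}^2$ that is robust under these corrections, and it is exactly the fact that the forcing decays faster than any power of $\lambda_u^{p_n}$ that makes the fixed-point argument defining the coefficients converge.
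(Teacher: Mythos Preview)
The paper does not prove this proposition; it is quoted verbatim as Proposition~12 of \cite{NTBr} and used as a black box (along with Lemma~\ref{xn asymptotic coordinate}, also imported from \cite{NTBr}). There is therefore no proof in this paper to compare your proposal against.

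Your sketch is a plausible outline of the renormalisation argument one would expect to find in \cite{NTBr}: pass to $Y$-coordinates, study $\mathcal{R}^2$ because $\mathcal{R}$ swaps $\ell_1,\ell_2$, diagonalise $D\mathcal{R}^2$ at the fixed point, and control the nonlinear remainder by the super-exponential decay of the distortions of $\varphi_n,\varphi_n^l,\varphi_n^r$. Whether the details (in particular the precise spectrum $\{\lambda_u,\lambda_s,1\}$ and the two-level lag producing $p_{n-4}$) match the actual proof would require consulting \cite{NTBr}, not the present paper.
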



%
\begin{lem}[Lemma 20, \cite{NTBr}]\label{xn asymptotic coordinate}
	Let $(\ell_1,\ell_{2})\in(1,2)^2$, and let $f\in \widetilde{\mathcal{W}}^X_{[1]} $ with critical exponent  $(\ell_1,\ell_{2})$. Then for $n:=2p_n$,
	\begin{equation*}
		\begin{array}{rcl}-x_{1,n} &=&e^{c_u(f)\lambda_u^{p_{n}}(e^u_2+e^u_3-e^u_4) +c_s(f)\lambda_s^{p_{n}}(e^s_2+e^s_3-e^s_4)-c_++ 0((e^{c_u(f)\lambda_u^{p_{n-4}}(e^u_2+e^u_3)})^{1/\overline{\ell}})}\\
			x_{2,n} &=& e^{c_u(f)\lambda_u^{p_{n}}(e^u_2+e^u_3) +c_s(f)\lambda_s^{p_{n}}(e^s_2+e^s_3)+ 0((e^{c_u(f)\lambda_u^{p_{n}}(e^u_2+e^u_3)})^{1/\overline{\ell}})}\\
			x_{3,n} &=& e^{c_u(f)\lambda_u^{p_{n}}(e^u_2+e^u_3) +c_s(f)\lambda_s^{p_{n}}(e^s_2+e^s_3)+ 0((e^{c_u(f)\lambda_u^{p_{n-4}}(e^u_2+e^u_3)})^{1/\overline{\ell}})}\\
			1-x_{4,n}&=& e^{c_u(f)\lambda_u^{p_{n}}e^u_2 +c_s(f)\lambda_s^{p_{n}}e^s_2+ 0((e^{c_u(f)\lambda_u^{p_{n-4}}(e^u_2+e^u_3)})^{1/\overline{\ell}})}
		\end{array}
	\end{equation*}
\end{lem}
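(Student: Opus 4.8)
\textbf{Proof plan for Lemma~\ref{xn asymptotic coordinate}.}
The plan is to derive the four asymptotic expansions for $-x_{1,n}$, $x_{2,n}$, $x_{3,n}$ and $1-x_{4,n}$ by feeding the renormalization asymptotics of Proposition~\ref{super formular} through the explicit coordinate change $(S)\to(X)$ recorded in~(\ref{change variable x S}). The vector $w_n(f)$ of Proposition~\ref{super formular} collects the $Y$-coordinates $(y_{1,n},\dots,y_{5,n})$, so its components are precisely the quantities $S_{1,n}$ and $\ln S_{2,n},\dots,\ln S_{5,n}$ through the definition~(\ref{change variable S y}). First I would read off, component by component, the scalar asymptotics of each $y_{i,n}$ from the displayed formula for $w_n(f)$: each one has the shape $c_u(f)\lambda_u^{p_n}(E^u)_i + c_s(f)\lambda_s^{p_n}(E^s)_i + c_+(f)(E^+)_i + (w_{fix})_i + O(\cdot)$, where the error term is the one named $O(e,c_u(f),\lambda_u,n)$ in the proposition. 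Writing $e^u_i=(E^u)_i$, $e^s_i=(E^s)_i$, I recover $\ln S_{i,n}$ as a linear combination of $\lambda_u^{p_n}$ and $\lambda_s^{p_n}$ with coefficients $c_u e^u_i + \cdots$, plus a controlled error.

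The second step is to substitute these into~(\ref{change variable x S}) and simplify. Each of $x_{2,n}$, $x_{3,n}$, $1-x_{4,n}$ is, from~(\ref{change variable x S}), a ratio of products of $S_{i,n}$'s (and factors $1\pm S_{3,n}(1-S_{1,n})S_{2,n}$); since $\ln$ turns products and ratios into sums, the logarithm of each $x$-coordinate becomes a linear combination of the $\ln S_{i,n}$, hence again a linear combination of $\lambda_u^{p_n}$ and $\lambda_s^{p_n}$. This is exactly why the target expansions appear as single exponentials $e^{(\,\cdots\,)}$: the exponent is $\ln$ of the coordinate, and the specific combinations $e^u_2+e^u_3-e^u_4$, $e^u_2+e^u_3$, $e^u_2$ appearing in the four lines are precisely the linear combinations of the rows $e^u_i$ dictated by which $S_{i,n}$ enter each formula in~(\ref{change variable x S}). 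I would verify these combinations directly: for $1-x_{4,n}$ the dominant factor is $S_{2,n}$ (giving $e^u_2$, $e^s_2$); for $x_{3,n}$ and $x_{2,n}$ one picks up the extra $S_{3,n}$-contribution (adding $e^u_3$, $e^s_3$); and for $-x_{1,n}=x_{2,n}/S_{4,n}$ one additionally divides by $S_{4,n}$, producing the $-e^u_4$, $-e^s_4$ and the constant $-c_+$ (coming from $\ln S_{4,n}=y_{4,n}$, whose $c_+$-term survives as the $E^+$ contribution). The bounded factors $1/(1+S_{3,n}(1-S_{1,n})S_{2,n})$ tend to $1$ and so contribute only to the constant $w_{fix}$-part and the error, not to the leading $\lambda_u^{p_n}$, $\lambda_s^{p_n}$ terms.

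The final step is bookkeeping on the error terms, and this is where I expect the main obstacle to lie. I must check that when the scalar errors $O\big((e^{c_u(f)\lambda_u^{p_{n-4}}})^{1/\overline\ell}\big)$ from Proposition~\ref{super formular} are pushed through the exponential and the algebraic combination in~(\ref{change variable x S}), they aggregate into exactly the advertised error $0\big((e^{c_u(f)\lambda_u^{p_{n-4}}(e^u_2+e^u_3)})^{1/\overline\ell}\big)$ (and the slightly different index $p_n$ in the $x_{2,n}$ line). The delicate points are: controlling how a multiplicative error $1+O(\varepsilon)$ on each $S_{i,n}$ combines under products and quotients so that the total additive error in the exponent stays of the stated order; confirming that the worst index ($p_{n-4}$ versus $p_n$) is correctly propagated by the slowest-decaying factor; and using $\overline\ell=\max\{\ell_1,\ell_2\}$ together with $c_u(f)<0$, $\lambda_u>1$ to see that all these exponential errors are genuinely subdominant and can be absorbed. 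Since $c_u(f)<0$ and $\lambda_u>1$, the quantities $\lambda_u^{p_n}$ grow while $c_u(f)\lambda_u^{p_n}\to-\infty$, so the exponentials $e^{c_u(f)\lambda_u^{p_n}(\cdots)}$ shrink to $0$ at a super-exponential rate; I would make this monotonicity explicit to justify absorbing the constant $w_{fix}$ and the $c_s\lambda_s^{p_n}$ contributions correctly into the displayed exponents. Invoking Remark~\ref{critical exponent and renormalization}, it suffices to carry out this computation for $n$ even (as written, $n=2p_n$), the odd case following by the $(\ell_1,\ell_2)\leftrightarrow(\ell_2,\ell_1)$ symmetry.
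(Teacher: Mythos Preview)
The paper does not give its own proof of this lemma: it is quoted verbatim as Lemma~20 of \cite{NTBr} and used as a black box, so there is no in-paper argument to compare against. That said, your plan is exactly the natural derivation one expects in \cite{NTBr}: read off the asymptotics of $y_{i,n}=\ln S_{i,n}$ from Proposition~\ref{super formular}, push them through the explicit inverse change~(\ref{change variable x S}), and collect the linear combinations $e^u_2$, $e^u_2+e^u_3$, $e^u_2+e^u_3-e^u_4$ together with the $-c_+$ coming from $S_{4,n}$. Your identification of the denominator $1+S_{3,n}(1-S_{1,n})S_{2,n}\to 1$ as contributing only to the error, and your handling of the error propagation via $c_u(f)<0$, $\lambda_u>1$, are the right observations; nothing in your outline is off-track.
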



%

\section{$C^1$ diffeomorphism class}
Throughout this section,  $f,g\in\widetilde{\mathcal{W}}^X_{[1]}$ with critical exponent $(\ell_{1},\ell_{2})\in(1,2)^2_{\neq}$, $U_f$ and $U_g$ as their flat piece respectively. The map $h$ is the topological conjugation between $f$ and $g$.

For $n$ even large enough, we obtain
\begin{equation}\label{R1}
	f(u_f)-f^{q_{n+1}+1}(u_f)=\prod_{k\leq n}^{}S_{4,k}(f)\sim e^{c_u(f)e^u_4(f)\lambda_u^{p_{n}}(f)+ \frac{n}{2}(c_+(f)+c'_+(f))},
\end{equation}
\begin{equation}\label{R1'}
	g(u_g)-g^{q_{n+1}+1}(u_g)=\prod_{k\leq n}^{}S_{4,k}(g)\sim e^{c_u(g)e^u_4(g)\lambda_u^{p_{n}}(g)+ \frac{n}{2}(c_+(g)+c'_+(g))}.
\end{equation}
Where we use Proposition~\ref{super formular}.

\begin{prop}\label{h bi-lipsch, c1 diffeo}
	Let $(\ell_{1},\ell_{2})\in(1,2)^2$. 
	If $f,g\in\widetilde{\mathcal{W}}^X_{[1]}$ with critical exponent $(\ell_{1},\ell_{2})$, and if $h$ is the  topological conjugation between $f$ and $g$ then
	\begin{equation*}
		h \mbox{ is a } C^1 \mbox{diffeomorphism} \Longrightarrow  c^*_u(f)=c^*_u(g),  c_+(f)+c'_+(f)=c_+(g)+c'_+(g).
	\end{equation*}
\end{prop}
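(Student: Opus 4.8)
The plan is to extract two independent necessary conditions from the hypothesis that $h$ is a $C^1$ diffeomorphism by looking at how $h$ distorts two different families of dynamically defined intervals whose ratios are controlled by the renormalization asymptotics. First I would use the fact that a $C^1$ diffeomorphism has bounded distortion: on any sequence of intervals shrinking to a point of $K_f$, the ratio of the length of the image interval to the length of the source interval converges to $|h'|$ at that point, which is a positive finite number. Applying this to the nested intervals $[\underline{-q_n},\underline{0})$ at the boundary point of the flat piece $U_f$ (using the scaling ratios $\alpha_n$ and the change-of-variables relations (\ref{change variable S x})--(\ref{change variable x S}) together with Lemma~\ref{xn asymptotic coordinate}), the coordinates $-x_{1,n}$, $x_{2,n}$, $x_{3,n}$, $1-x_{4,n}$ of the renormalized maps must have the same exponential asymptotics for $f$ and for $g$, up to a bounded multiplicative factor. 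Matching the dominant terms $c_u(f)\lambda_u^{p_n}(e^u_2+e^u_3)$ against $c_u(g)\lambda_u^{p_n}(e^u_2+e^u_3)$ (and the analogous expressions with $e^u_4$, using Remark~\ref{critical exponent and renormalization} to cover both parities) forces $c^*_u(f)=c^*_u(g)$ for $\iota=u$ and each starred variant, since $\lambda_u>1$ means any discrepancy in the leading coefficient would make the ratio blow up or collapse.

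Next I would derive the condition $c_+(f)+c'_+(f)=c_+(g)+c'_+(g)$ from a different interval: the gap $f(u_f)-f^{q_{n+1}+1}(u_f)=\prod_{k\le n}S_{4,k}(f)$ recorded in (\ref{R1}), which by the same $C^1$ bounded-distortion argument must be comparable to the corresponding quantity (\ref{R1'}) for $g$ with a ratio bounded away from $0$ and $\infty$ (it converges to $|h'|$ at the relevant point of the attractor, or to a ratio of such derivatives). Taking logarithms of the two asymptotic expressions,
\begin{equation*}
c_u(f)e^u_4(f)\lambda_u^{p_n}+\tfrac{n}{2}\bigl(c_+(f)+c'_+(f)\bigr)-c_u(g)e^u_4(g)\lambda_u^{p_n}-\tfrac{n}{2}\bigl(c_+(g)+c'_+(g)\bigr)
\end{equation*}
must stay bounded as $n\to\infty$. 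Since the first part already knows $c^*_u(f)=c^*_u(g)$, the $\lambda_u^{p_n}$ terms cancel; what remains is $\tfrac{n}{2}$ times the difference $(c_+(f)+c'_+(f))-(c_+(g)+c'_+(g))$, which is bounded only if that difference vanishes. This yields the second equality.

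The main obstacle I anticipate is justifying rigorously that the length ratio of these specific dynamically defined intervals under $h$ genuinely converges to a single positive finite limit, rather than merely being $O(1)$ and $\Omega(1)$ along subsequences; this requires knowing that the intervals shrink to points of $K_f$ where $h$ is differentiable with nonzero derivative, and that consecutive nested intervals at the endpoint of $U_f$ have comparable lengths (bounded geometry / real bounds from \cite{NTBr}), so that a $C^1$ modulus-of-continuity estimate on $h'$ pins the ratio down. A secondary technical point is bookkeeping the error terms $O((e^{c_u(f)\lambda_u^{p_{n-4}}})^{1/\overline{\ell}})$ from Proposition~\ref{super formular} and Lemma~\ref{xn asymptotic coordinate}: these are super-exponentially small compared to the terms being matched, so they are harmless, but one should check that the subtraction defining the gap in (\ref{R1}) does not cause a loss of the leading term — which is why the product formula $\prod_{k\le n}S_{4,k}$ is the right object to track rather than a naive difference of two nearby coordinates. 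Once the convergence of the distortion ratio is in hand, both conclusions follow by the elementary asymptotic-matching arguments sketched above, invoking Remark~\ref{critical exponent and renormalization} so that doing the even case suffices.
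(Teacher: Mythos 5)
Your proposal is correct and follows essentially the same route as the paper: the paper's proof consists precisely of comparing the asymptotics \textbf{(\ref{R1})} and \textbf{(\ref{R1'})} under the bounded-ratio constraint imposed by a $C^1$ conjugacy, matching the $\lambda_u^{p_n}$ terms to force $c^*_u(f)=c^*_u(g)$ and the linearly growing $\tfrac{n}{2}$ terms to force $c_+(f)+c'_+(f)=c_+(g)+c'_+(g)$ (with the odd-index case handled via Remark~\ref{critical exponent and  renormalization}). You supply more detail than the paper's one-line proof, including the auxiliary use of Lemma~\ref{xn asymptotic coordinate}, but the underlying argument is the same.
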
				
\begin{proof}
	The proposition stems directly from \textbf{(\ref{R1})} and \textbf{(\ref{R1'})}.
\end{proof}

In the following, we use the below notations. For all $n\in\N$,
\begin{description}
	\item[$\bullet$] $A_n(f)= (f(U_f)  , f^{q_{n}+1}(U_f))$;
	\item[$\bullet$]   $B_n(f)= (f^{-q_{n}+1}(U_f),f(U_f)   )$;
	\item[$\bullet$]  $C_n(f)=f^{-q_{n}+1}(U_f)$;
	\item[$\bullet$]  $D_n(f)= ( f^{q_{n-1}+1}(U_f),f^{-q_{n}+1}(U_f))$. 
\end{description}
And their iterates
\begin{description}
	\item[$\bullet$] $A^{i}_n(f)= f^{i}(A_n(f))$ for $0\leq i< q_{n-1} $;
	\item[$\bullet$]    $B^{i}_n(f)= f^{i}(B_n(f))$ for $0\leq i< q_{n} $;
	\item[$\bullet$]  $C^{i}_n(f)= f^{i}(C_n(f))$ for $0\leq i< q_{n}$;
	\item[$\bullet$]   $D^{i}_n(f)= f^{i}(D_n(f))$ for $0\leq i< q_{n}$. 
\end{description}
Observe that for all $n\in\N$, $
\mathcal{P}_{n}=\{A^{i}_n(f), B^{j}_n(f), C^{j}_n(f), D^{j}_n(f)|\; 0\leq i< q_{n-1},\;0\leq j< q_{n} \} $ together with the boundary points of intervals  are the $n$th partition of $Dom(f)$ the domain of $f$. Since that $h(U_f)=U_g$, it is easy to see that 

\begin{description}
	\item[$\bullet$] $h(A^{i}_n(f))= A^{i}_n(g)$ for $0\leq i< q_{n-1} $;
	\item[$\bullet$]  $h(B^{i}_n(f))= B^{i}_n(g)$ for $0\leq i< q_{n} $;
	\item[$\bullet$]   $h(C^{i}_n(f))= C^{i}_n(g)$ for $0\leq i< q_{n}$;
	\item[$\bullet$]  $h(D^{i}_n(f))= D^{i}_n(g)$ for $0\leq i< q_{n}$. 
\end{description}

Let $n\in\N$. Let $Dh_n: [0,1]\longrightarrow \R^+ $ the function defined by:
\begin{equation*}
	Dh_n(x)= \dfrac{|h(I)|}{|I|};
\end{equation*}
with, $I\in\mathcal{P}_n $ and $x\in  \stackrel{\circ}{I} $. Observe that, for all $T=\cup I_i$; $I_i\in \mathcal{P}_n $ and for all $m\geq n$, 
\begin{equation}\label{R3}
	h(T)= \int_{T}^{}Dh_m.
\end{equation}
We have the following result.  
\begin{lem}[Lemma 26, \cite{NTBr}]\label{Dhn cauchy}
Let $f,g\in\widetilde{\mathcal{W}}^X_{[1]}$. If $c^*_u(f)=c^*_u(g)$ and $c_+(f)+c'_+(f)=c_+(g)+c'_+(g)$, then 
	\begin{equation*}
		\log\dfrac{Dh_{n+1}}{Dh_{n}}=O\left(\alpha_{n-2}^{\frac{1}{\ell_{1}},\frac{1}{\ell_{2}}}+|c^*_s(g)-c^*_s(f)|\lambda_s^{p_n}\right). 
	\end{equation*}
\end{lem}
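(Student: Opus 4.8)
The plan is to fix $x\in[0,1]$, follow the interval $I_n=I_n(x)\in\mathcal{P}_n$ that contains $x$, and estimate $\log\bigl(Dh_{n+1}(x)/Dh_n(x)\bigr)$ uniformly in $x$. If $I_n$ is not subdivided in passing from $\mathcal{P}_n$ to $\mathcal{P}_{n+1}$, then $Dh_{n+1}(x)=Dh_n(x)$ and there is nothing to prove; so assume $I_{n+1}:=I_{n+1}(x)\subsetneq I_n$. By the combinatorial structure of the dynamical partition, every element of $\mathcal{P}_n$ is an iterate $f^{i}(J)$, $0\le i<q_n$, of one of the intervals $A_n,B_n,C_n,D_n$ adjacent to the flat piece, and the $\mathcal{P}_{n+1}$-piece inside it is $f^{i}(J')$ for a subinterval $J'\subset J$ belonging to the finite family of such fundamental intervals at renormalization depths within $O(1)$ of $n$. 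Since $h$ conjugates $f$ to $g$ and $h(U_f)=U_g$, we have $h(I_n)=g^{i}(h(J))$ and $h(I_{n+1})=g^{i}(h(J'))$, whence
\begin{align*}
\log\frac{Dh_{n+1}(x)}{Dh_n(x)}
&=\log\frac{|h(J')|\,|J|}{|h(J)|\,|J'|}
+\log\frac{|g^{i}(h(J'))|\,|h(J)|}{|g^{i}(h(J))|\,|h(J')|}
-\log\frac{|f^{i}(J')|\,|J|}{|f^{i}(J)|\,|J'|}.
\end{align*}

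First I would estimate the two ``distortion'' terms, which measure the failure of $f^{i}|_J$ (resp. $g^{i}|_{h(J)}$) to be affine. Along the orbit $J,f(J),\dots,f^{i-1}(J)$ the distortion is accumulated only at the returns close to the flat piece, where $f$ factors through the diffeomorphic pieces $\varphi_m,\varphi_m^{l},\varphi_m^{r}$ of the renormalizations $\mathcal{R}^m f$ with $m$ ranging over depths $\gtrsim n$; using the real (bounded geometry) bounds, the negative Schwarzian and the cross-ratio tools to sum these contributions into a geometric series dominated by its leading term, together with the estimates $dist(\varphi_m),dist(\varphi_m^{l}),dist(\varphi_m^{r})=O\bigl((e^{\,c_u(f)\lambda_u^{p_{m-4}}})^{1/\overline{\ell}}\bigr)$ of Proposition~\ref{super formular} and the asymptotics of Lemma~\ref{xn asymptotic coordinate} (which identify these quantities, up to a fixed positive power in the exponent, with $\alpha_{n-2}^{1/\ell_1}$ or $\alpha_{n-2}^{1/\ell_2}$ according to parity), each of the two terms is $O\bigl(\alpha_{n-2}^{\frac1{\ell_1},\frac1{\ell_2}}\bigr)$. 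The bound for $g$ is the same because $g$ has the same critical exponents and, by hypothesis, the same $c_u$.

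Next I would treat the ``renormalization-scale'' term $\log\bigl(|h(J')|\,|J|/(|h(J)|\,|J'|)\bigr)$. The ratio $|J'|/|J|$ is an explicit, Lipschitz function $\Phi$ of the renormalization coordinates $w_m(f)$ (for the finitely many $m$ near $n$), obtained from the changes of variable $(X)\to(S)\to(Y)$ and Lemma~\ref{xn asymptotic coordinate}; by universality $|h(J')|/|h(J)|$ is the same function $\Phi$ of $w_m(g)$. Hence this term is $O\bigl(|w_m(g)-w_m(f)|\bigr)$. Plugging in $w_m(f)=c_u(f)\lambda_u^{p_m}E^u+c_s(f)\lambda_s^{p_m}E^s+c_+(f)E^++w_{fix}+O(\cdots)$ and the analogue for $g$: the $\lambda_u^{p_m}$ terms cancel since $c^*_u(f)=c^*_u(g)$; the $O(\cdots)$ remainders are again $O(\alpha_{n-2}^{\frac1{\ell_1},\frac1{\ell_2}})$; the $E^+$ contribution drops out of the \emph{ratio} $\Phi$ (it is the neutral overall-rescaling mode, to which ratios of lengths are insensitive), so that only the combination $c_+(f)+c'_+(f)$ --- assumed equal for $f$ and $g$ --- can enter; what survives is the stable contribution $|c^*_s(g)-c^*_s(f)|\lambda_s^{p_m}=O\bigl(|c^*_s(g)-c^*_s(f)|\lambda_s^{p_n}\bigr)$. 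Adding the three estimates and taking the supremum over $x$ gives the lemma.

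The step I expect to be the main obstacle is the distortion control of $f^{i}$ (and $g^{i}$) on the fundamental interval $J$ with the \emph{precise rate} $\alpha_{n-2}^{1/\ell_1,1/\ell_2}$: one needs not merely that this distortion is bounded but that it tends to $0$ at the stated speed, which requires a careful count of how often --- and at which renormalization depth --- the orbit of $J$ meets a neighbourhood of the flat interval, so that the per-visit losses $dist(\varphi_m^{\cdot})$ can be summed against the (exponentially in $\lambda_u^{p_m}$) shrinking geometry and dominated by their leading term. A secondary delicate point is to verify that the length ratio $\Phi$ is genuinely independent of the $E^+$ direction, which is exactly what allows the hypothesis to be used only in the combined form $c_+(f)+c'_+(f)=c_+(g)+c'_+(g)$ rather than $c_+(f)=c_+(g)$ separately.
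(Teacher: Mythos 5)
First, a point of reference: the paper does not prove this statement at all --- it is imported verbatim as Lemma~26 of \cite{NTBr} --- so there is no in-paper argument to compare yours against. Judged on its own terms, your three-term decomposition of $\log\bigl(Dh_{n+1}(x)/Dh_n(x)\bigr)$ into a fundamental-interval ratio plus two distortion terms is algebraically correct and is the natural way to organise the estimate, and matching the two error terms in the conclusion to (a) the distortion of the transfer maps $f^i$, $g^i$ and (b) the stable-mode discrepancy between $w_n(f)$ and $w_n(g)$ is the right idea.

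Nevertheless, what you have written is a plan rather than a proof: both load-bearing estimates are asserted and then flagged by you yourself as ``the main obstacle'' and ``a secondary delicate point''. (i) The claim that the total distortion of $f^{i}$ on a fundamental interval of $\mathcal{P}_n$ is $O\bigl(\alpha_{n-2}^{\frac{1}{\ell_1},\frac{1}{\ell_2}}\bigr)$ --- i.e.\ decays at that precise rate rather than merely being bounded --- is exactly the content that has to be established; nothing in the quoted Proposition~\ref{super formular} or Lemma~\ref{xn asymptotic coordinate} delivers it without the orbit-counting and summation argument you only describe. (ii) More seriously, the assertion that the length ratios entering $Dh_{n+1}/Dh_n$ are insensitive to the $E^+$ direction, so that only $c_+ + c'_+$ can appear, is not self-evident and is the crux of why the hypothesis may be taken in summed form. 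Formula~(\ref{R1}) shows that the accumulated scaling $\prod_{k\le n}S_{4,k}$ carries the factor $e^{\frac{n}{2}(c_++c'_+)}$ precisely because consecutive renormalization levels alternate between $c_+$ and $c'_+$; a single step from level $n$ to level $n+1$ therefore contributes $c_+$ or $c'_+$ \emph{individually} according to parity, and Lemma~\ref{xn asymptotic coordinate} shows that $x_{1,n}$ depends on $c_+$ alone. One must check that in the specific ratios $|J'|/|J|$ and $|h(J')|/|h(J)|$ these individual contributions cancel against the $c_+$-dependence of the renormalized coordinates; if they did not, the lemma would in fact require the stronger hypothesis $c_+(f)=c_+(g)$ and $c'_+(f)=c'_+(g)$ separately. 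Until (i) and (ii) are actually carried out, the argument is a credible outline of the proof in \cite{NTBr}, not a proof.
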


Let $I$  be an interval of $\mathcal{P}_n$. Observe that every boundary point of $I$ is in  the orbit of the critical point $o(f(U))$. As a consequence,  $Dh_n$ is well defined for $x\not\in o(f(U)$. Lemma~\ref{Dhn cauchy} implies that

\begin{equation*}
	D(x)=\lim\limits_{n\rightarrow\infty}Dh_n(x)
\end{equation*}
exists and, \begin{equation}\label{R00}0<\inf_xD(x)<\sup_xD(x)<\infty.\end{equation}

\begin{lem}\label{Dhn go to D}
	Let $f,g\in\widetilde{\mathcal{W}}^X_{[1]}$ with critical exponent
	$(\ell_{1},\ell_{2})\in(1,2)^2_{\neq}$.  If  If $c^*_u(f)=c^*_u(g)$, and $c_+(f)+c'_+(f)=c_+(g)+c'_+(g)$, then 
	\begin{equation*}
		D: Dom(f)\setminus o(f(U))\longrightarrow (0,\infty)
	\end{equation*}
	is continuous. In particular, for all $n\in\N$,
	\begin{equation*}
		\log\dfrac{D(x)}{Dh_n(x)}=O(\alpha_{n-2}^{\frac{1}{\ell_{1}},\frac{1}{\ell_{2}}}+|c^*_s(g)-c^*_s(f)|\lambda_s^{p_n}) 
	\end{equation*}
\end{lem}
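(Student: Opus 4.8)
The plan is to obtain the continuity of $D$ as a uniform limit of the functions $Dh_n$ on the relevant set, together with a quantitative rate coming from Lemma~\ref{Dhn cauchy}. First I would fix $x\in Dom(f)\setminus o(f(U))$ and observe that, since $x$ avoids the orbit of the critical point, $x$ lies in the interior of some interval $I_n\in\mathcal{P}_n$ for every $n$, so each $Dh_n(x)$ is genuinely defined and $Dh_{n+1}/Dh_n$ is controlled pointwise by Lemma~\ref{Dhn cauchy}. Summing the estimate $\log(Dh_{n+1}/Dh_n)=O(\alpha_{n-2}^{1/\ell_1,1/\ell_2}+|c^*_s(g)-c^*_s(f)|\lambda_s^{p_n})$ over $n\geq N$, and using that the scaling ratios $\alpha_n$ decay geometrically (this is part of the geometry developed in \cite{NTBr}, and is exactly what makes the right-hand side summable) together with $\lambda_s\in(0,1)$, gives
\begin{equation*}
	\log\frac{D(x)}{Dh_N(x)}=\sum_{n\geq N}\log\frac{Dh_{n+1}(x)}{Dh_n(x)}=O\!\left(\alpha_{N-2}^{\frac{1}{\ell_1},\frac{1}{\ell_2}}+|c^*_s(g)-c^*_s(f)|\lambda_s^{p_N}\right),
\end{equation*}
which is the second assertion of the lemma; in particular the convergence $Dh_n\to D$ is uniform on $Dom(f)\setminus o(f(U))$ with a rate independent of $x$.

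Next I would establish continuity of $D$. Each $Dh_n$ is locally constant on $Dom(f)\setminus o(f(U))$ — it equals $|h(I)|/|I|$ on the interior of each $I\in\mathcal{P}_n$ — hence continuous on that set. A uniform limit of continuous functions is continuous, so $D$ is continuous on $Dom(f)\setminus o(f(U))$. Positivity and finiteness of the values, i.e. $D(x)\in(0,\infty)$, follow from \eqref{R00}, which records that $0<\inf_x D(x)\leq\sup_x D(x)<\infty$; alternatively one reads it off the telescoping estimate above since $Dh_N(x)$ is a fixed positive number and the correction is bounded.

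The main obstacle is making the summation step rigorous and uniform: one must know that $\alpha_{n-2}^{1/\ell_1,1/\ell_2}$ (the notation meaning, per the conventions of \cite{NTBr}, a bound of the form $\alpha_{n-2}^{1/\ell_1}$ on even steps and $\alpha_{n-2}^{1/\ell_2}$ on odd steps, cf. Remark~\ref{critical exponent and renormalization}) is summable in $n$ with a bound not depending on $x$, and that the implied $O$-constant in Lemma~\ref{Dhn cauchy} is itself uniform in $x$ and $n$. Both are consequences of the exponential control of the geometry: the a priori bounds force $\overline{\alpha}:=\limsup_n\alpha_n<1$, so $\sum_{n\geq N}\alpha_{n-2}^{1/\overline{\ell}}=O(\overline{\alpha}^{\,N/\overline{\ell}})$ and the series $\sum\lambda_s^{p_n}$ converges as a geometric-type series in $p_n=n/2$. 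Once uniformity is in hand, the rest is the standard "uniform limit of locally constant functions" argument, and the displayed rate is just the tail of the telescoping sum. I would also note, as the excerpt already does, that the hypotheses $c^*_u(f)=c^*_u(g)$ and $c_+(f)+c'_+(f)=c_+(g)+c'_+(g)$ enter only through Lemma~\ref{Dhn cauchy}, so no further use of Proposition~\ref{super formular} is needed here.
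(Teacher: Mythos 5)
Your proposal is correct and follows essentially the same route as the paper: telescoping the estimate of Lemma~\ref{Dhn cauchy} to get the quantitative bound on $\log(D/Dh_{n_0})$, and then deducing continuity from the fact that $Dh_{n_0}$ is constant on each partition interval (the paper phrases this as bounding $\log(D(x)/D(y))$ for $x,y$ in the same $I\in\mathcal{P}_{n_0}$, which is the same idea as your uniform-limit-of-locally-constant-functions argument). Your explicit remarks on the geometric summability of $\alpha_n^{1/\ell_1,1/\ell_2}$ and $\lambda_s^{p_n}$ only make explicit what the paper leaves implicit.
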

\begin{proof}
Let $n_0\in\N$. From \textbf{Lemma~\ref{Dhn cauchy}}, we have 
	\begin{align*}
		\left|\log\dfrac{D(x)}{Dh_{n_0}(x)}\right| &\leq \sum\limits_{k\geq 0}^{}\log\dfrac{Dh_{n_0+k+1}(x)}{Dh_{n_0+k}(x)}\\
		&\leq \sum\limits_{k\geq 0}^{} O(\alpha_{k-2}^{\frac{1}{\ell_{1}},\frac{1}{\ell_{2}}}+|c^*_s(g)-c^*_s(f)|\lambda_s^{p_k})\\ &\leq
		O(\alpha_{n_0-2}^{\frac{1}{\ell_{1}},\frac{1}{\ell_{2}}}+|c^*_s(g)-c^*_s(f)|\lambda_s^{p_{n_0}}).
	\end{align*}
	Therefore, if $x,y\in I\in\mathcal{P}_{n_0} $
	\begin{align*}
		\left|\log\dfrac{D(x)}{Dh_(y)}\right| &\leq \left|\log\dfrac{D(x)}{Dh_{n_0}(x)}\right| +\left| \log\dfrac{D(y)}{Dh_{n_0}(x)}\right| \\
		&\leq
		O(\alpha_{n_0-2}^{\frac{1}{\ell_{1}},\frac{1}{\ell_{2}}}+|c^*_s(g)-c^*_s(f)|\lambda_s^{p_{n_0}});
	\end{align*}
	which means that $D$ is continuous. 
\end{proof}

Let $T$ be an interval in $Dom(f)$. Then by formula \textbf{(\ref{R3})},
\begin{equation}
	|h(T)|=\lim\limits_{n\rightarrow \infty}|h_n(T)|=\lim\limits_{n\rightarrow \infty}\int_{T}Dh_n\leq \int_T\sup_xD(x)\leq \sup_xD(x)|T|.\label{R4}
\end{equation}

\begin{prop}\label{h homeo diffeo}
	Let $f,g\in\widetilde{\mathcal{W}}^X_{[1]}$ with  $(\ell_{1},\ell_{2})\in(1,2)^2_{\neq}$ as its critical exponent, and let $h$ the topological conjugation between $f$ and $g$. Then
	\begin{equation*}
		h \mbox{ is } C^1  \mbox{ diffeomorphism } \Longleftarrow  c^*_u(f)=c^*_u(g), c_+(f)+c'_+(f)=c_+(g)+c'_+(g).
	\end{equation*}
\end{prop}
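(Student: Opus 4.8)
\textbf{Proof plan for Proposition~\ref{h homeo diffeo}.} The plan is to show that under the hypotheses $c^*_u(f)=c^*_u(g)$ and $c_+(f)+c'_+(f)=c_+(g)+c'_+(g)$, the limit function $D$ constructed above is precisely the derivative of $h$ on all of $Dom(f)$, and that $D$ is positive and continuous everywhere (not just off $o(f(U))$); together with $D>0$ this yields that $h$ is a $C^1$ diffeomorphism. First I would invoke Lemma~\ref{Dhn cauchy} and Lemma~\ref{Dhn go to D} to record that $D=\lim_n Dh_n$ exists, is bounded away from $0$ and $\infty$ by \textbf{(\ref{R00})}, and is continuous on $Dom(f)\setminus o(f(U))$. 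Then, using \textbf{(\ref{R3})} and the dominated convergence argument behind \textbf{(\ref{R4})}, I would show that for \emph{every} interval $T$ one has $|h(T)|=\int_T D$; applying this to $T=[x_0,x]$ for $x_0,x$ in a common element of some $\mathcal{P}_n$ and differentiating gives $h'(x)=D(x)$ at every point where $D$ is continuous, hence $h\in C^1$ away from the countable set $o(f(U))$ with nonvanishing derivative.

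The remaining—and main—point is to upgrade continuity and differentiability across the exceptional orbit $o(f(U))$, i.e. at the endpoints of the partition intervals. Here I would argue that at such a point $x$, approaching from the left and from the right, the ratios $|h(I)|/|I|$ over the two adjacent chains of partition intervals both converge, and that the two one-sided limits coincide. The tool is again Lemma~\ref{Dhn cauchy}: the bound $\log(Dh_{n+1}/Dh_n)=O(\alpha_{n-2}^{1/\ell_1,1/\ell_2}+|c^*_s(g)-c^*_s(f)|\lambda_s^{p_n})$ is \emph{uniform in }$x$, and since $\alpha_{n-2}\to 0$ (bounded geometry of the attractor, from \cite{NTBr}) and $\lambda_s\in(0,1)$, the convergence $Dh_n\to D$ is uniform on $Dom(f)\setminus o(f(U))$. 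A uniform limit of functions that are continuous on each open partition interval, with oscillation tending to $0$ as the partition refines, extends continuously across the partition endpoints; this gives a continuous positive $D$ on all of $Dom(f)$, and then the identity $|h(T)|=\int_T D$ for all $T$ forces $h$ to be everywhere differentiable with $h'=D$.

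Finally, since $h$ is a homeomorphism of $Dom(f)$ with $h'=D$ continuous and $0<\inf D\le\sup D<\infty$, the inverse function theorem (in its global one-dimensional form) gives that $h^{-1}$ is also $C^1$ with $(h^{-1})'=1/(D\circ h^{-1})$, so $h$ is a $C^1$ diffeomorphism; this completes the implication. I expect the genuine obstacle to be the extension across $o(f(U))$: one must check that the one-sided asymptotic ratios of $|h(I)|/|I|$ match up at each critical-orbit point, which is exactly where uniformity of the estimate in Lemma~\ref{Dhn cauchy} together with $\alpha_n\to0$ does the work—the rest is bookkeeping with \textbf{(\ref{R3})}, \textbf{(\ref{R4})} and Lemma~\ref{Dhn go to D}.
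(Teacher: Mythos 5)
There is a genuine gap at exactly the step you yourself identify as the crux. Your plan for extending $D$ across the critical orbit $o(f(U))$ rests on the claim that the uniform Cauchy estimate of Lemma~\ref{Dhn cauchy}, together with $\alpha_n\to 0$, forces the two one-sided limits of $D$ at a point $c_k=f^k(f(U))$ to coincide. It does not. Lemma~\ref{Dhn cauchy} compares $Dh_{n+1}(x)$ with $Dh_n(x)$ at a \emph{fixed} $x$, i.e.\ it compares the ratio over the level-$(n+1)$ partition interval containing $x$ with the ratio over the level-$n$ interval containing $x$; both intervals lie on the \emph{same side} of $c_k$. The point $c_k$ remains a partition endpoint of $\mathcal{P}_n$ for all large $n$ (it separates $B^k_n$ from $A^k_n$), so the estimate never couples the two sides. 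In general, a uniformly convergent sequence of step functions adapted to a fixed nested sequence of partitions can have a discontinuous limit at a persistent partition endpoint (take $Dh_n\equiv 1$ to the left of a point and $\equiv 2$ to the right, for all $n$: the Cauchy estimate is trivially satisfied and the limit jumps). So your "uniform limit with oscillation tending to $0$" principle, as stated, is false, and the continuity of $D$ at $c_k$ needs a separate input.

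The paper supplies that input by computing both one-sided limits explicitly:
\begin{equation*}
D_-(c_k)=\lim_{n\to\infty}\frac{|B^k_{2n}(g)|}{|B^k_{2n}(f)|},\qquad
D_+(c_k)=\lim_{n\to\infty}\frac{|A^k_{2n}(g)|}{|A^k_{2n}(f)|},
\end{equation*}
reducing the double ratio $D_-(c_k)/D_+(c_k)$ to $\lim_n \bigl(x_{3,2n}(g)/(-x_{1,2n}(g))\bigr)\big/\bigl(x_{3,2n}(f)/(-x_{1,2n}(f))\bigr)$ via the renormalization coordinates, and then invoking Proposition~\ref{super formular} and Lemma~\ref{xn asymptotic coordinate}. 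It is precisely here that the hypotheses $c^*_u(f)=c^*_u(g)$ and $c_+(f)+c'_+(f)=c_+(g)+c'_+(g)$ are used: they cancel the unstable ($\lambda_u^{p_n}$) and neutral contributions in the exponents, leaving only a stable term of size $O(\lambda_s^{p_n})$, whence the ratio tends to $e^{0}=1$. Your surrounding bookkeeping (the identity $|h(T)|=\int_T D$ from \textbf{(\ref{R3})}--\textbf{(\ref{R4})}, differentiation to get $h'=D$ off the critical orbit, and the concluding inverse-function step using $0<\inf D\le\sup D<\infty$) matches the paper and is fine; the missing ingredient is the asymptotic computation of the one-sided ratios, not a soft uniform-convergence argument.
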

\begin{proof}
Observe that by \textbf{(\ref{R4})}, $h$ is differential on $Dom(f)\setminus o(f(U))$  with $Dh=h$. The rest of the proof consists of extending $D$ in a continuous function on $Dom(f)$ under condition $c^*_u(f)=c^*_u(g)$  and $c_+(f)+c'_+(f)=c_+(g)+c'_+(g)$. are going to show under this condition that, for every 
%
%
%
%
%
$k\geq 0$ 
\begin{equation*}
	\lim\limits_{x\rightarrow c_k^-}D(x)=\lim\limits_{x\rightarrow c_k^+}D(x)
\end{equation*}
where $c_k=f^k(f(U))$. 

Let  $k\geq 0$,  and let $n\in\mathbb{N}$ big enough such that $q_{n-1}>k$.We have

\begin{equation*}
	D_-(c_k):=\lim\limits_{x\rightarrow c_k^-}D(x)=\lim\limits_{n\rightarrow \infty}\dfrac{|B^k_{2n}(g)|}{B^k_{2n}(f)|},
\end{equation*}
and 
\begin{equation*}
	D_+(c_k):=\lim\limits_{x\rightarrow c_k^+}D(x)=\lim\limits_{n\rightarrow \infty}\dfrac{|A^k_{2n}(g)|}{A^k_{2n}(f)|}.
\end{equation*}
By Proposition~\ref{super formular}, and  Lemma~\ref{xn asymptotic coordinate}, we get
\begin{align*}
	\dfrac{D_-(c_k)}{D_+(c_k)}=\lim\limits_{n\rightarrow \infty} \dfrac{\dfrac{|B^{k}_{2n}(g)|}{|A^{k}_{2n}(g)|}}{\dfrac{|B^{k}_{2n}(f)|}{|A^{k}_{2n}(f)|}}
	=\lim\limits_{n\rightarrow \infty} \dfrac{\dfrac{|B_{2n}(g)|}{|A_{2n}(g)|}}{\dfrac{|B_{2n}(f)|}{|A_{2n}(f)|}}\vspace{0.2cm}
	=\lim\limits_{n\rightarrow \infty} \dfrac{\dfrac{x_{3,2n}(g)}{-x_{1,2n}(g)}}{\dfrac{x_{3,2n}(f)}{-x_{1,2n}(f)}}=\lim\limits_{n\rightarrow \infty}e^{\lambda_s^n}=1.
\end{align*}
\end{proof}

 By combining  Propositions 23 and 28 in \cite{NTBr}, Propositions~\ref{h bi-lipsch, c1 diffeo} and \ref{h homeo diffeo}, for Fibonacci circle maps with a flat piece, negative Schwartzian derivative, and different critical exponents, bi-Lipschitz  and $C^1$ diffeomorphism class  are equivalent. 

\end{document}